\newtheorem{lemma}{Lemma}
\newtheorem{theorem}{Theorem}
\newtheorem{remark}{Remark}
\newtheorem{conjecture}{Conjecture}
\renewcommand\Im{\mathfrak{Im}(z)} 
\newcommand\ZZ{\mathbb Z}
\newcommand\RR{\mathbb R}
\newcommand\CC{\mathbb C}
\newcommand\NN{\mathbb N}
\newcommand\vect[2]{\tiny
\begin{pmatrix}
#1\\
#2
\end{pmatrix}
}
\begin{document}

\author{Nikita Kalinin}

\title[Evaluating lattice sums via telescoping]{Evaluating lattice sums via telescoping on $SL_+(2,\ZZ)$: a short proof of
$\displaystyle \sum
\frac{1}{\|x\|^2\|y\|^2\|x+y\|^2}=\frac{\pi}{4}$ and of Zagier's identity}

\address{Guangdong Technion Israel Institute of Technology (GTIIT),
241 Daxue Road, Shantou, Guangdong Province 515063, P.R.~China}
\address{Technion -- Israel Institute of Technology, Haifa 3200003, Israel}
\email{nikaanspb@gmail.com}

\maketitle

\begin{abstract}
We study lattice sums $\sum \frac{1}{(\|x\|\|y\|\|x+y\|)^s}$ taken over $SL_+(2,\ZZ)$, i.e.\ the set of pairs $(x,y)$ of primitive lattice vectors in $\ZZ_{\geq 0}^2$ with $\det(x, y) = 1$.  We prove convergence of these and similar (determinant weighted) sums and introduce a new telescoping method on $SL_+(2,\ZZ)$ that yields, in particular, $$\sum_{(x,y)\in SL_+(2,\ZZ)} \frac{1}{\|x\|^2\,\|y\|^2\,\|x+y\|^2}=\frac{\pi}{4},$$ and a short proof of Zagier's identity $D_{1,1,1}=2E(z,3)+\pi^3\zeta(3)$. 

Keywords: lattice sums, telescoping series, lattice geometry, modular graph functions. 

AMS classification: 11M41, 11P21, 33B15, 11Y60, 11F03
\end{abstract}

%

\section{Introduction}

The most basic example of a series over a lattice (here $\ZZ$) is the Riemann zeta function
\[
\zeta(s)=\sum_{n=1}^\infty \frac{1}{n^s},
\]
which converges for $\Re(s)>1$.
A closely related family is given by the holomorphic Eisenstein series
\[
G_{2k}(z)
=\frac{1}{2}\sum_{(m,n)\in\ZZ^2\setminus \{(0,0)\}}
\frac{1}{(m+nz)^{2k}},
\qquad z\in\mathcal{H},\ k\ge2.
\]

A similar sum, which is the main object of this paper, is defined as follows. Each {\it primitive} (i.e.\ with coprime coordinates) vector $v$ in $\ZZ_{>0}^2$ can be uniquely presented as a sum of two vectors $x(v),y(v)\in\ZZ_{\geq 0}^2$ spanning a parallelogram of oriented area one, i.e.\ $\det(x(v),y(v))=1$. Define

\[
S =\sum_{\text{primitive } v\in \ZZ_{>0}^2}\frac{1}{\|v\|\,\|x(v)\|\,\|y(v)\|}=\sum_{(x, y) \in SL_+(2,\ZZ)} \frac{1}{\|x\|\,\|y\|\,\|x + y\|}, 
\]
where  $$SL_+(2,\ZZ) = \{(x, y)\mid x,y \in \ZZ_{\geq 0}^2,\ \det(x, y) = 1\}=$$
$$=\left\{\begin{pmatrix} a & b \\ c & d \end{pmatrix} \,\middle|\, a,b,c,d\in\ZZ_{\geq 0},\ ad-bc=1\right\}
.$$

Although the book \cite{borwein2013lattice} is entirely devoted to lattice sums, it does not mention $S$, and
to the best of our knowledge this particular sum has not been studied before.

Direct numerical computations show that the series $S$  converges, albeit slowly. For example, the number of terms whose contributions lie in successive ranges is as follows:
\begin{itemize}
  \item 5 terms contribute values in $(10^{-1},1]$,
  \item 30 terms in $(10^{-2},10^{-1}]$,
  \item 158 terms in $(10^{-3},10^{-2}]$,
  \item 790, 3874, 18512, 88006, 414680, 1943594, 9083386, 42354734, 197214206, 917339530, etc., terms in the subsequent intervals $(10^{-(n+1)},10^{-n}]$ for $n=3,\dots,12$.
\end{itemize}

Each successive count is approximately five times the previous one, 
suggesting that the contribution of terms in the range $(10^{-(n+1)},10^{-n}]$ 
to the total sum is roughly $2^{-n}$. Hence, summing all terms with absolute value larger than $10^{-15}$ should yield a precision 
of about $2^{-15} \approx 3\cdot 10^{-5}$ (according to the above heuristic), which is rather poor. 
In practice, we obtain only three correct decimal digits in
\[
S=\sum_{(x, y) \in SL_+(2,\ZZ)} \frac{1}{\|x\|\,\|y\|\,\|x + y\|}=3.432\ldots
\]
after several days of computation. It would be very interesting to determine the exact value of $S$, that is, to express it in closed form 
in terms of standard special values such as $\zeta(s)$, $E(z,s)$, and related functions.

Sums of the form
\[
\sum_{(x,y)\in\ZZ^2\setminus\{(0,0)\}} 
\frac{1}{\|x\|^k\,\|y\|^n\,\|x+y\|^m}
\]
appear in physics under the name ``modular graph functions'' (MGFs) in perturbative string theory 
\cite{d2015modular,broedel2019elliptic,gerken2020generating}. 
The identity of Zagier discussed below is the simplest nontrivial example of such an MGF.

\subsection{Telescoping over $SL_+(2,\ZZ)$.}
We prove the convergence of $S$ and accelerate its numerical evaluation by using a telescoping argument for a suitable auxiliary series. Note that any basis
\[
p=(x,y)\in SL_+(2,\ZZ)
\]
generates two other bases
\[
p_x=(x,x+y),\qquad p_y=(x+y,y),
\]
and every element of $SL_+(2,\ZZ)$ is obtained from the seed basis
\[
(e_1,e_2)\in SL_+(2,\ZZ),\qquad e_1=\vect{1}{0},\ e_2=\vect{0}{1},
\]
by iterating the operations $p\mapsto p_x$ and $p\mapsto p_y$ (cf.\ the Stern--Brocot tree). Hence, to compute a sum $\sum_{p\in SL_+(2,\ZZ)}f(p)$ one can use the {\bf telescoping argument  on $SL_+(2,\ZZ)$}: 
we look for a function $F$ on $SL_+(2,\ZZ)$ such that
\[
f(p)=F(p)-F(p_x)-F(p_y)\qquad\text{for all }p\in SL_+(2,\ZZ).
\]
For any finite subtree $T$ of the rooted tree generated from $(e_1,e_2)$, summing this identity over all interior vertices $p\in T$ yields a complete cancellation of intermediate terms, leaving only the contribution from the root and the boundary $\partial T$. If $F(p)\to0$ sufficiently fast as $p$ tends to infinity in $SL_+(2,\ZZ)$, the boundary contribution tends to zero as $T$ exhausts $SL_+(2,\ZZ)$, and we obtain

\begin{align*}
\sum_{p\in SL_+(2,\ZZ)}f(p)
&= F(e_1,e_2)-F(e_1,e_1+e_2)-F(e_1+e_2,e_2)\\
&\quad+F(e_1,e_1+e_2)-F(e_1,2e_1+e_2)-F(2e_1+e_2,e_1+e_2)\\
&\quad+F(e_1+e_2,e_2)-F(e_1+2e_2,e_2)-F(e_1+e_2,e_1+2e_2)+\cdots\\
&= F(e_1,e_2),
\end{align*}
where the dots indicate further cancellations along the branches of the tree.

One can also formulate this telescoping procedure in terms of Farey pairs, see for example \cite{hata1995farey}. 
In some situations $F$ does tend to zero, but not fast enough for the boundary contribution to vanish; for instance,
\[
F\!\left(\vect{a}{b},\vect{c}{d}\right)\sim \frac{1}{bd}.
\]
In such cases the limit of the boundary contribution can be computed as an explicit integral of a ``charge at infinity'', see \cite{certain}.

Simultaneously, the same telescoping method proves the convergence of the angular series
\[
\sum_{\substack{m,n\in \ZZ_{>0}\\ \gcd(m,n)=1}} 
\frac{\alpha_{m,n}}{\sqrt{m^2+n^2}},
\]
where, for each primitive vector $v=\vect{m}{n}$, we choose vectors $x(v),y(v)\in\ZZ_{\ge0}^2$ with $x(v)+y(v)=v$ and $\det(x(v),y(v))=1$, and denote by $\alpha_{m,n}$ the angle between $x(v)$ and $y(v)$.

\subsection{Zagier's formula.}
In an unpublished\footnote{I am in possession of an unfinished note by Zagier, titled ``Evaluation of lattice sums'', which is likely the same note.} note \cite{zagiernotes}, Zagier considers the following sum. Let $z=x+iy\in\mathcal H =\{z\in\CC| \Im>0\}$ and
\[
D_{1,1,1}(z)
=\sideset{}{'}\sum_{\substack{\omega_1+\omega_2+\omega_3=0\\ \omega_k\in \ZZ z+\ZZ}}
\frac{y^{3}}{|\omega_1\omega_2\omega_3|^{2}}.
\]
Here and throughout, $\sideset{}{'}\sum$ denotes a sum with the terms corresponding to infinite summands omitted. In the present case, this means that we remove all terms for which at least one of the $\omega_i$ is equal to $0$.

Then, as shown in \cite{zagiernotes,d2017modular,brown2018class},
\begin{equation}
\label{eq_2}
D_{1,1,1}(z) = 2E(z,3)+\pi^3\zeta(3),
\end{equation}
where
\[
E(z,s)
= \frac{1}{2}\sideset{}{'}\sum_{m,n\in\ZZ}\frac{y^s}{|mz+n|^{2s}}
\]
is the (real-analytic) non-holomorphic Eisenstein series.

Using our telescoping method, we prove that
\begin{equation}
\label{eq_za}
\sideset{}{'}\sum_{\substack{\omega_1+\omega_2+\omega_3=0\\ \omega_k\in \ZZ z+\ZZ}}
\frac{|\det (\omega_1,  \omega_2)|^{-s}}{|\omega_1\omega_2\omega_3|^{2}}
= \frac{6\pi}{y^3}\,\zeta(s+3)\,\zeta(s+2).
\end{equation}
By substituting $s=0$ and separating the contribution of collinear triples $(\omega_1,\omega_2,\omega_3)$, we recover Zagier's formula \eqref{eq_2} for general $z$; see Section~\ref{sec_4} for details. It seems plausible that other modular sums considered in \cite{d2017modular,brown2018class} can also be evaluated using our telescoping method.

\subsection*{Outline of the paper}

In Section~\ref{sec_2} we use the telescoping method to analyze the convergence properties of the lattice sum 
\[
S=\sum_{(x, y) \in SL_+(2,\ZZ)} \frac{1}{\|x\|\,\|y\|\,\|x + y\|}.
\]
Several related identities and estimates are also established there.

Section~\ref{sec_tele} extends these ideas and uses an \emph{angle-partition argument} to evaluate the identity
\[
\sum_{(x, y) \in SL_+(2,\ZZ)} \frac{1}{\|x\|^2\,\|y\|^2\,\|x + y\|^2} =\frac{\pi}{4},
\]
which goes back to Hurwitz \cite{hurwitz1905darstellung} and has been revisited in more recent works
\cite{duke2021class,o2024topographs,d2015modular,d2017modular}. In Section~\ref{sec_4} we prove \eqref{eq_za}, providing an alternative proof of Zagier's formula \eqref{eq_2}. 
The following conjecture looks plausible.

\begin{conjecture}
The function
\[
S(s)=\sum_{(x, y) \in SL_+(2,\ZZ)} \frac{1}{(\|x\|\,\|y\|\,\|x + y\|)^s}
\]
admits a meromorphic continuation to the half-plane $\Re(s)>2/3-\varepsilon$ for some $\varepsilon>0$ and has a simple pole at $s=2/3$.
\end{conjecture}

In Section~\ref{sec_5} we consider generalizations of the preceding sums by inserting a determinant power factor and, for $s>1$, we show convergence of
\[
S^{\mathrm{det}}(s,z)
=\sideset{}{'}\sum_{\substack{w_1 + w_2 + w_3 = 0 \\ w_i \in \ZZ z+\ZZ}}
\frac{|\det(w_1, w_2)|^{-s}}{|w_1|\,|w_2|\,|w_3|}.
\]

It would be interesting to further investigate the analytic properties of $S^{\mathrm{det}}(s,z)$, in particular at the cusp $z\to i\infty$ (cf.\ \cite{d2015modular,broedel2019elliptic,gerken2020generating}).

\subsection{Discussion.}
One can view the set of all bases in $\ZZ_{\geq 0}^2$ (that is, $SL_+(2,\ZZ)$) as a graph whose vertices are bases and whose edges correspond to the elementary moves $p \mapsto p_x=(x,x+y)$ and $p \mapsto p_y=(x+y,y)$. The telescoping argument suggests using ideas from graph theory or homological algebra: one could try to characterize which summands $f$ admit such an $F$ (exactness of a $1$-cocycle) and what conditions on growth at infinity are needed.

By exploring these tools, one might uncover new classes of summation identities or even new invariants that arise as “charges at infinity” when telescoping fails (some examples are presented in Section~\ref{sec_tele} and in \cite{certain}). In summary, systematizing the telescoping method could transform it from an ad hoc trick into a broadly applicable technique in analytic number theory, much as the Abel summation method and the Euler–Maclaurin formula.

\section{The sum $\sum\frac{1}{\|x\|\,\|y\|\,\|x+y\|}$ and related sums}\label{sec_2}

Define the function $G(x,y)$ for $x,y\in \ZZ^2$ by
\begin{equation}
\label{eq_G}
G(x, y) = \|x\| - \frac{x \cdot y}{\|y\|}.
\end{equation}
We have
\[
G(x,y)= \|x\|\,(1-\cos\alpha)
= \|x\|\,\frac{\alpha^2+O(\alpha^4)}{2}
= \frac{D\,(\alpha+ O(\alpha^3))}{2\|y\|},
\]
where $\alpha$ is the angle between $x$ and $y$, and
\[
D = \det(x,y) = \|x\|\,\|y\|\,\sin\alpha.
\]
Here and throughout, $O(\alpha^k)$ denotes a function of $\alpha$ whose absolute value is bounded by $C|\alpha|^k$ for some absolute constant $C$ and all sufficiently small $\alpha$. Let $\alpha_x$ be the angle between $x$ and $x+y$, and $\alpha_y$ the angle between $y$ and $x+y$. Then one observes the identity
\begin{align*}
(**)\quad
G(x, y) &- G(x + y, y) - G(x, x + y) \\
&= \left( \|x\| - \frac{x \cdot y}{\|y\|} \right)
  - \left( \|x+y\| - \frac{(x+y) \cdot y}{\|y\|} \right)
  - \left( \|x\| - \frac{x \cdot (x+y)}{\|x+y\|} \right)\\
&=\|y\| -\|x+y\| +\frac{x \cdot (x+y)}{\|x+y\|}=\\
&= \frac{1}{\|x+y\|}\left(\|y\|\,\|x+y\|-y\cdot(x+y)\right)=\|y\|\,(1-\cos \alpha_y). 
\end{align*}

\begin{theorem}
\label{thm_xyz} The following sum  converges.
$$\sum_{(x, y) \in SL_+(2,\ZZ)} \frac{1}{\|x\|\,\|y\|\,\|x+y\|}$$
\end{theorem}

\begin{proof}
Since we sum over $(x,y)\in SL_+(2,\ZZ)$ with $\det(x,y)=1$, the angle $\alpha$ between $x$ and $y$ tends to $0$ as $\|x\|,\|y\|\to\infty$. Thus the Taylor expansions in $\alpha$, $\alpha_x$ and $\alpha_y$ used below are valid for all but finitely many pairs, and these exceptional pairs do not affect convergence.

If the area of the parallelogram spanned by $x$ and $y$ is $D$, then the parallelogram spanned by $y$ and $x+y$ also has area $D$, so
\[
\|y\|\,\|x+y\|\sin\alpha_y
= D
= (\alpha_y+O(\alpha_y^3))\,\|y\|\,\|x+y\|
= D\,(1+O(\alpha_y^2)),
\]
and we have
\begin{equation}\label{eq4}
\|y\|\,(1-\cos \alpha_y)
= \frac{\|y\|\,(\alpha_y^2+O(\alpha_y^4))}{2}
= \frac{D\bigl(\alpha_y+O(\alpha_y^3)\bigr)}{2\|x+y\|}.
\end{equation}

On the other hand, for the angle $\alpha$ between $x$ and $y$, we have
\begin{equation}\label{eq5}
\frac{1}{\|x\|\,\|y\|\,\|x+y\|} = \frac{\sin \alpha}{D \|x+y\|} = \frac{\alpha+O(\alpha^3)}{D \|x+y\|}.
\end{equation}

Define
\[
F(x, y)
= 2\left(\|x\| - \frac{x \cdot y}{\|y\|}\right)
 +2\left(\|y\| - \frac{x \cdot y}{\|x\|}\right)
= 2\bigl(G(x,y)+G(y,x)\bigr).
\]

Using $(**)$ first for the pair $(x,y)$ and then for the pair $(y,x)$, and applying \eqref{eq4} in each case, we obtain
\begin{align*}
&G(x, y) - G(x + y, y) - G(x, x + y) \\
&\quad+ G(y, x) - G(y,x+y) - G(x+y, x) \\
&\qquad= \frac{D\bigl(\alpha_y+O(\alpha_y^3)\bigr)}{2\|x+y\|}
      + \frac{D\bigl(\alpha_x+O(\alpha_x^3)\bigr)}{2\|x+y\|}.
\end{align*}

Since the angle $\alpha$ between $x$ and $y$ is the sum of the angles $\alpha_x$ and $\alpha_y$ between $x,x+y$ and $x+y,y$, respectively, we have
\[
\alpha = \alpha_x+\alpha_y,\qquad
\alpha_x^3+\alpha_y^3 = O(\alpha^3).
\]
Comparing the last display with \eqref{eq5} and using the definition of $F$, we obtain
\[
F(x, y) - F(x + y, y) - F(x, x + y)
= \frac{D^2\bigl(1+O(\alpha^2)\bigr)}{\|x\|\,\|y\|\,\|x+y\|}.
\]
Since $D=\det(x,y)=1$ for $(x,y)\in SL_+(2,\ZZ)$, the preceding identity gives
\[
F(x, y) - F(x + y, y) - F(x , x + y)
= \frac{1+O(\alpha^2)}{\|x\|\,\|y\|\,\|x+y\|}.
\]
By applying this identity over an appropriate finite domain (for instance, lattice pairs $(x, y) \in SL_+(2,\ZZ)\cap [0,n]^4$) and using the cancellation in the telescoping sum, we obtain
\[
\sum_{(x, y) \in SL_+(2,\ZZ) \cap [0, n]^4}
\bigl(F(x, y) - F(x + y, y) - F(x , x + y)\bigr)
\longrightarrow F\left(\vect{1}{0}, \vect{0}{1}\right),
\]
as $n\to \infty$, because the contributions from the boundary of the domain tend to zero. Indeed, by \eqref{eq_G} and the expansion above, each boundary term is $O\!\left(\frac{\alpha_i}{|y_i|}\right)$; the total sum of the angles $\alpha_i$ along a fixed ray is $\pi/2$, so the sum of the terms with $|y_i|<N$ tends to zero for fixed $N$, while the contribution of the remaining terms is bounded by $C\,\pi/N$.

Denote $e_1 = \vect{1}{0}$ and $e_2 = \vect{0}{1}$. Since $G(e_1,e_2)=G(e_2,e_1)=1$, we have
\[
F(e_1, e_2) = 2\bigl(G(e_1,e_2)+G(e_2,e_1)\bigr) = 4.
\]
Moreover, for all but finitely many pairs $(x,y)\in SL_+(2,\ZZ)$ we have $\alpha$ sufficiently small, so that $1+O(\alpha^2)$ is bounded away from $0$ and infinity. Hence the summand
\[
\frac{1}{\|x\|\,\|y\|\,\|x+y\|}
\]
is comparable (up to a bounded multiplicative factor) to the telescoping summand
\[
F(x, y) - F(x + y, y) - F(x , x + y),
\]
whose series converges to $F(e_1,e_2)$. This comparison implies the convergence of the original series, which completes the proof.
\end{proof}

\begin{remark}
Recall that $\alpha=\alpha_x+\alpha_y$ and $\sin\alpha = 1/(\|x\|\,\|y\|)$ for primitive pairs. Computing instead the sum of the differences
\begin{align*}
&F(x, y) - F(x + y, y) - F(x, x + y) - \frac{1}{\|x\|\,\|y\|\,\|x+y\|} \\
&\quad=\frac{1}{\|x+y\|}
\left(\frac{2-2\cos \alpha_y}{\sin \alpha_y}
      +\frac{2-2\cos \alpha_x}{\sin \alpha_x}
      - \sin(\alpha_y+\alpha_x)\right) \\
&\quad=\frac{1}{\|x+y\|}
\left(2\Bigl(\alpha_x+\frac{\alpha_x^3}{12}
             +\alpha_y+\frac{\alpha_y^3}{12}\Bigr)
      -\Bigl(\alpha_x+\alpha_y - \frac{(\alpha_x+\alpha_y)^3}{6}\Bigr)
      + O(\alpha^5)\right) \\
&\quad < \frac{\alpha^3}{\|x+y\|}
       < \frac{8}{\|x\|^3\,\|y\|^3\,\|x+y\|},
\end{align*}
we obtain a much faster convergent series. In particular, summing this difference and then adding the telescopic value $F(e_1,e_2)=4$ yields
\[
\sum_{(x, y) \in SL_+(2,\ZZ)} \frac{1}{\|x\|\,\|y\|\,\|x+y\|}
=3.43232752714\ldots
\]
to high precision within the same computational time.
\end{remark}

Note that the sum
\[
\sum_{(x,y)\in SL_+(2,\ZZ)} \frac{1}{\|x\|^3\,\|y\|^3\,\|x+y\|}
\]
converges. Indeed, from the triangle inequality we have
\[
\|x+y\|\le \|x\|+\|y\|\le 2\max(\|x\|,\|y\|),
\]
so
\[
\frac{1}{\|x\|^3\,\|y\|^3\,\|x+y\|}
\leq \frac{8}{\|x+y\|^4},
\]
and the sum of $1/\|x+y\|^4$ over primitive vectors $x+y\in\ZZ^2$ converges.

\begin{remark}
To accelerate the computation of the sum
\[
\sum_{(x,y)\in SL_+(2,\ZZ)}\frac{1}{\|x\|\,\|y\|\,\|x+y\|^2},
\]
one can use a telescoping argument for
\[
1-\frac{x\cdot y}{\|x\|\,\|y\|} = 1-\cos\alpha,
\]
where $\alpha$ is the angle between $x$ and $y$. Indeed, since
\[
\frac{1}{\|x\|\,\|y\|\,\|x+y\|^2}
= \sin\alpha_y\,\sin\alpha_x
= \alpha_y\alpha_x +O(\alpha^3),
\]
and
\[
\alpha_y\alpha_x
= \bigl(1-\cos(\alpha_y+\alpha_x)\bigr)
  -\bigl(1-\cos\alpha_y\bigr)
  -\bigl(1-\cos\alpha_x\bigr)
  + O(\alpha^3),
\]
we obtain a telescoping representation up to an absolutely convergent error term, analogous to the case treated in Theorem~\ref{thm_xyz}.
\end{remark}

\begin{theorem}
For every primitive vector $(m,n) \in \ZZ_{\geq 0}^2$ there exists a unique pair $(x,y)\in SL_+(2,\ZZ)$ such that $x+y=(m,n)$. Let $\alpha_{m,n}$ denote the angle between $x$ and $y$. Then the following sum converges:
\[
\sum_{\substack{(m,n)\in\ZZ_{\geq 0}^2\\ \gcd(m,n)=1}} \frac{\alpha_{m,n}}{\sqrt{m^2+n^2}}.
\]
\end{theorem}

\begin{proof}
For $(x,y)\in SL_+(2,\ZZ)$ we have $\det(x,y)=1$, so
\[
\sin\alpha_{m,n} = \frac{1}{\|x\|\,\|y\|}.
\]
For all but finitely many primitive vectors $(m,n)$ the angle $\alpha_{m,n}$ is small, and we may write
\[
\alpha_{m,n} + O(\alpha_{m,n}^3)
= \sin\alpha_{m,n}
= \frac{1}{\|x\|\,\|y\|}.
\]
Since $\sqrt{m^2+n^2}=\|x+y\|$, this gives
\[
\frac{\alpha_{m,n}}{\sqrt{m^2+n^2}}
= \frac{1}{\|x\|\,\|y\|\,\|x+y\|}
  + \frac{O(\alpha_{m,n}^3)}{\|x+y\|}.
\]
The main term is exactly the summand from Theorem~\ref{thm_xyz}, and the error term is absolutely summable by the same argument as in the proof of that theorem. Hence the series converges.
\end{proof}

For completeness we also prove here the following theorem.

\begin{theorem}\label{th_5}
The series
\[
\sum_{(x, y) \in SL_+(2,\ZZ)} \frac{1}{\|x\|^{s}\,\|y\|^{s}\,\|x+y\|^{s}}
\]
converges for $s>2/3$ and diverges for $s\leq 2/3$.
\end{theorem}

\begin{proof}
Fix a primitive vector $x\in\ZZ_{\ge0}^2$. All $y$ with $(x,y)\in SL_+(2,\ZZ)$ are of the form
\[
y = y_0 + n x,\qquad n\in\ZZ_{\ge0},
\]
for some fixed $y_0$ with $\det(x,y_0)=1$: indeed, if $\det(x,y)=\det(x,y_0)=1$ then $\det(x,y-y_0)=0$, so $y-y_0$ is a multiple of $x$. Restricting to the nonnegative cone gives $n\ge0$.

For large $n$ we have
\[
(n+1)\|x\|\geq\|y\|\geq n\|x\|,\qquad (n+2)\|x\|\geq \|x+y\|\geq (n+1)\|x\|,
\]
and hence  
\[
\frac{1}{\|x\|^{3s}}\cdot \frac{1}{(n+2)^{2s}}\leq \frac{1}{\|x\|^{s}\,\|y\|^{s}\,\|x+y\|^{s}}
\leq  \frac{1}{\|x\|^{3s}}\cdot \frac{1}{n^{2s}}.
\]
Since $\sum_{n\ge1}n^{-2s}$ converges for $s>1/2$, the convergence of 
\[
\sum_{n\ge1} \frac{1}{\|x\|^{s}\,\|y\|^{s}\,\|x+y\|^{s}}\]
is equivalent to the convergence of 
\(
\frac{1}{\|x\|^{3s}},
\)
which converges if and only if
\[
\int_1^\infty R^{2}\cdot R^{-3s}\,\frac{dR}{R}
= \int_1^\infty R^{1-3s}\,dR
\]
converges, which holds exactly when $1-3s<-1$, i.e.\ $s>2/3$.
\end{proof}

\section{Telescoping}
\label{sec_tele}

Let $\det(x, y)$ denote the determinant of the $2\times 2$ matrix with columns $x,y\in\ZZ^2$. Recall
\[
SL_+(2,\ZZ) = \big\{ (x, y) \mid  x,y\in \ZZ_{\geq 0}^2,\ \det(x, y) = 1 \big\},
\]
that is, the set of pairs of lattice vectors $x=(a,b),y=(c,d)$ in the first quadrant that span lattice parallelograms of oriented area one, i.e.\ $ad-bc=1$.

\begin{theorem} 
\label{th_one}
\[
4\sum_{(x,y)\in SL_+(2,\ZZ)} \frac{1}{\|x\|^2\,\|y\|^2\,\|x+y\|^2} = \pi.
\]
\end{theorem}

\begin{proof}
Define
\[
P(x,y) = \frac{x\cdot y}{\|x\|^2\,\|y\|^2},\qquad P:(\ZZ^2)^2\to \RR.
\]

An explicit computation shows that
\begin{equation}
\label{eq_1}
P(x,y)-P(x+y,y)-P(x,x+y)
= \frac{-2\,\det(x,y)^2}{\|x\|^2\,\|y\|^2\,\|x+y\|^2}.
\end{equation}

Fix $n\in\NN$ and consider the finite set
\[
A_n=\bigl\{(x,y)\in SL_+(2,\ZZ)\mid x,y\in[0,n]^2\bigr\}.
\]
We sum the expression
\[
P(x,y)-P(x+y,y)-P(x,x+y)
\]
over $(x,y)\in A_n$. By cancelling identical terms with opposite signs as in Section~\ref{sec_2}, we obtain
\[
\sum_{(x,y)\in A_n} \bigl(P(x,y)-P(x+y,y)-P(x,x+y)\bigr)
= P\left(e_1,e_2\right) + S_n,
\]
where $S_n$ is the sum of the boundary contributions
\[
S_n = \sum_{(x,y)\in B_n} \bigl(-P(x+y,y)-P(x,x+y)\bigr),
\]
and
\[
B_n=\bigl\{(x,y)\in A_n\mid x+y\notin[0,n]^2\bigr\}.
\]

Each element $(x,y)\in B_n$ represents a parallelogram spanned by $x$ and $y$. All these parallelograms have area one, and their angles at the origin partition the right angle $\pi/2$ of the first quadrant.

Let $\alpha$ be the angle between $x$ and $y$. For $(x,y)\in SL_+(2,\ZZ)$ we have
\[
\det(x,y)=\|x\|\,\|y\|\,\sin\alpha =1,
\]
so $\sin\alpha = 1/(\|x\|\,\|y\|)$. Hence
\[
P(x,y)
= \frac{x\cdot y}{\|x\|^2\,\|y\|^2}
= \frac{\|x\|\,\|y\|\,\cos\alpha}{\|x\|^2\,\|y\|^2}
= \frac{\cos\alpha}{\|x\|\,\|y\|}
= \cos\alpha\,\sin\alpha.
\]
For small $\alpha$ this gives
\[
P(x,y) = \cos\alpha\,\sin\alpha
= \left(1-\frac{\alpha^2}{2}+O(\alpha^4)\right)\left(\alpha-\frac{\alpha^3}{6}+O(\alpha^5)\right)
= \alpha - \frac{2}{3}\alpha^3 + O(\alpha^5).
\]

The same expansion applies to the pairs $(x+y,y)$ and $(x,x+y)$ occurring in $B_n$, with the corresponding angles at the origin. The sum of linear terms $\alpha$ in the expansion of $S_n$ tends to
$ -\pi/2$, while the sum of the $O(\alpha^3)$
terms converges to zero, so the total boundary contribution tends to $-\pi/2$, i.e.\ we obtain
\[
P\left(e_1,e_2\right) + S_n \longrightarrow -\frac{\pi}{2}
\]
as $n\to\infty$, because $P(e_1,e_2)$ is zero. 

On the other hand, applying \eqref{eq_1} with $\det(x,y)=1$ for $(x,y)\in A_n$ gives
\[
\sum_{(x,y)\in A_n} \bigl(P(x,y)-P(x+y,y)-P(x,x+y)\bigr)
= -2\sum_{(x,y)\in A_n} \frac{1}{\|x\|^2\,\|y\|^2\,\|x+y\|^2}.
\]
Passing to the limit $n\to\infty$ and comparing with the previous expression, we obtain
\[
-2\sum_{(x,y)\in SL_+(2,\ZZ)} \frac{1}{\|x\|^2\,\|y\|^2\,\|x+y\|^2}
= -\frac{\pi}{2},
\]
so
\[
\sum_{(x,y)\in SL_+(2,\ZZ)} \frac{1}{\|x\|^2\,\|y\|^2\,\|x+y\|^2}
= \frac{\pi}{4},
\]
which is equivalent to the stated identity.
\end{proof}

Note that, to obtain telescoping relations such as \eqref{eq_1}, one may consider other quadratic forms in place of $q(x)=\|x\|^2$. This leads to telescoping identities over topographs; see the separate article \cite{kalinintopo}. There are several stories here, all based on telescoping but with very different flavors, so it is difficult to place them under a single unifying framework.

\begin{theorem}[\cite{easter}]
\label{thm_old}
 $$\sum\limits_{(x,y)\in SL_+(2,\ZZ)} (\|x\|\,+\|y\|\,-\|x+y\|)= 2.$$   
 $$\sum\limits_{(x,y)\in SL_+(2,\ZZ)} (\|x\|\,+\|y\|\,-\|x+y\|)^2 = 2-\pi/2.$$ 
\end{theorem}
\begin{proof}
We provide a telescopic proof of these identities. Recall that
\[
F(x,y)=2\bigl(G(x,y)+G(y,x)\bigr)
=2\left(\|x\|-\frac{x\cdot y}{\|y\|}+\|y\|-\frac{x\cdot y}{\|x\|}\right)
\]
is the function $F$ from Theorem~\ref{thm_xyz}. A direct computation shows the exact identity
\[
F(x, y) - F(x + y, y) - F(x, x + y)
= 2\bigl(\|x\|+\|y\|-\|x+y\|\bigr).
\]
By the telescoping argument over $SL_+(2,\ZZ)$, this proves the first equality.

The second equality can also be proved by telescoping\footnote{\url{https://mathoverflow.net/questions/250041/the-number-pi-and-summation-by-sl2-mathbb-z}} (and by different methods \cite{easter}). Consider the function $T$ defined on pairs of vectors by
\[
T(x,y) = 2\bigl(\|x\|\,\|y\|-x\cdot y\bigr).
\]
One checks that
\[
T(x,y)-T(x+y,y)-T(x,x+y)
= \bigl(\|x\|+\|y\|-\|x+y\|\bigr)^2.
\]
Applying telescoping, we cancel all but boundary terms. The boundary at the root is $T(e_1,e_2)=2$, while the boundary terms at infinity satisfy
\[
T(x,y)= 2\,\frac{1-\cos\alpha}{\sin\alpha}+O(\alpha^3) = \alpha+O(\alpha^3),
\]
and the sum of the corresponding angles is $\pi/2$. This gives a total contribution of $\pi/2$ from infinity, and hence the second identity.
\end{proof}

\begin{theorem}\label{th4}
\[
\sum_{(x,y)\in SL_+(2,\ZZ)} 
\frac{\|x\|+\|y\|-\|x+y\|}{\|x\|\,\|y\|\,\|x+y\|}
= \frac{\pi}{2}-1.
\]
\end{theorem}

\begin{proof} Define
\[
K(x,y)=\frac{1}{\|x\|\,\|y\|}.
\]
Then
\[
K(x,y)-K(x+y,y)-K(x,x+y)
= \frac{\|x+y\|-\|x\|-\|y\|}{\|x\|\,\|y\|\,\|x+y\|}.
\]
Telescoping over $SL_+(2,\ZZ)$ shows that the sum of the right-hand side equals
\[
K(e_1,e_2)-\lim_{\text{boundary}}\sum K(x,y).
\]
As in the proof of Theorem~\ref{thm_old}, the boundary terms at infinity behave like
\[
K(x,y)=\alpha +O(\alpha^3)
\]
where $\alpha$ is the angle between $x$ and $y$; and their total contribution is $\pi/2$  (the same angle-partition argument as in the proof of Theorem~\ref{th_one}), while $K(e_1,e_2)=1$. This yields the stated value $\pi/2-1$ when we proceed as in Theorem~\ref{th_one}.
\end{proof}

\section{Derivation of Zagier's lattice sum formula}\label{sec_4}

We now present an alternative proof of Zagier's formula using our methods. We split the sum into collinear and non-collinear triples. Collinear triples give the Eisenstein part $2E(i,3)$; non-collinear triples are counted modulo central symmetry, which contributes the $\pi^3\zeta(3)$ term. We now treat these two parts.
Let 
\[
H = \bigl(\ZZ\times\ZZ_{>0}\bigr)\,\cup\,\bigl(\ZZ_{\ge0}\times\{0\}\bigr),
\]
i.e.\ the set of lattice vectors in the closed upper half-plane with the negative $x$-axis removed.

\begin{theorem}
\label{th_three}
For every integer $n\ge1$,
\[
\sum_{\substack{x,y\in H\\ \det(x,y)=n}}
 \frac{n^2}{\|x\|^2\,\|y\|^2 \,\|x+y\|^2}
 = \frac{\pi}{2n}\,\sigma_1(n).
\]
\end{theorem}

\begin{proof}

We use \eqref{eq_1} and follow the proof of Theorem~\ref{th_one}. Note that there are $\sigma_1(n)$ (the sum of divisors of $n$) inequivalent sublattices of $\ZZ^2$ with determinant $n$, each of which is generated by a basis of the form 
\[
\left(\tfrac{n}{d},0\right),\ (k,d),\quad 0\leq k<d,
\]
where $d$ is a divisor of $n$. 
 For a fixed sublattice $L$ of this type (with given $k$ and $d$), the vectors
\[
(k+j\tfrac{n}{d},d),\qquad j\in\ZZ,
\]
partition the angle $\pi$ of the upper half-plane as seen from the origin (cf.\ the angle $\pi/2$ in Theorem~\ref{th_one}). For $\det(x,y)=n$ we have
\[
n = \|x\|\,\|y\|\,\sin\alpha,
\]
hence, using the same function $P$ from the proof of Theorem~\ref{th_one},
\[
P(x,y)
= \frac{\cos\alpha\sin\alpha}{n}
= \frac{1}{n}\,\bigl(\alpha+O(\alpha^3)\bigr),
\]
where $\alpha$ is the angle between $x$ and $y$. Thus, the boundary contribution in the telescoping sum associated to \eqref{eq_1} tends to $-\pi/n$, while the root contribution $P\left(\vect{0}{\tfrac{n}{d}},\vect{k-N\tfrac{n}{d}}{d}\right)\to 0$ when $N\to\infty$. Using \eqref{eq_1} with $\det(x,y)=n$ gives, for each such lattice $L$,
\[
-2n^2 \sum_{\substack{x,y\in H\cap L\\ \det(x,y)=n}}
\frac{1}{\|x\|^2\,\|y\|^2\,\|x+y\|^2}
= -\frac{\pi}{n},
\]
hence
\[
\sum_{\substack{x,y\in H\cap L\\ \det(x,y)=n}}
\frac{n^2}{\|x\|^2\,\|y\|^2\,\|x+y\|^2}
= \frac{\pi}{2n}.
\]
Summing over all $\sigma_1(n)$ index-$n$ sublattices completes the proof.
\end{proof}

Look at
\[
D_{1,1,1}(i)
= \sideset{}{'}\sum_{\omega_1+\omega_2+\omega_3=0}
\frac{1}{|\omega_1\omega_2\omega_3|^{2}}, \quad \omega_k\in \ZZ i+\ZZ.
\]

Among such triples there are collinear triples. If $\omega_1,\omega_2,\omega_3$ are collinear, let $\omega$ be the primitive vector in the common direction of the two vectors pointing the same way. Then the triple is of the form
\[
(b\omega,\ d\omega,\ -(b+d)\omega),
\qquad b,d\in\ZZ_{>0},
\]
and any of the three vectors can be the opposite of the other two, yielding a factor three. Using the classical Tornheim series identity \cite{tornheim1950harmonic}
\[
\sum_{b,d\in \ZZ_{>0}}\frac{1}{b^2d^2(b+d)^2} = \frac{\zeta(6)}{3},
\]
the contribution of all collinear triples is
\begin{align*}
\sum_{\substack{\omega\ \text{primitive}}}
\frac{1}{|\omega|^6}\cdot 3\sum_{b,d>0}\frac{1}{b^2d^2(b+d)^2}
&=
\sum_{\substack{\omega\ \text{primitive}}}
\frac{\zeta(6)}{|\omega|^6}.
\end{align*}
Since
\[
\sum_{\omega\in\ZZ^2\setminus\{0\}}\frac{1}{|\omega|^6}
= \zeta(6)\sum_{\substack{\omega\ \text{primitive}}}\frac{1}{|\omega|^6}
= 2E(i,3),
\]
we obtain
\[
D_{1,1,1}(i)\big|_{\text{collinear}} = 2E(i,3),
\]
as in the first term in~\eqref{eq_2}.

For non-collinear triples, up to central symmetry we may assume that two of the vectors (say, $\omega_1,\omega_2$) lie in $H$, and that the parallelogram spanned by $\omega_1$ and $\omega_2$ has positive signed area. Each non-collinear triple yields exactly $12$ such ordered pairs $(x,y)$ with $x,y\in H$ and $\det(x,y)>0$ (six permutations of $(\omega_1,\omega_2,\omega_3)$ and a factor $2$ from central symmetry). Thus
\[
D_{1,1,1}(i) -  2 E(i,3)
= 12 \sum_{n\in\ZZ_{>0}} \sum_{\substack{x,y\in H\\ \det(x,y)=n}} \frac{1}{\|x\|^2\,\|y\|^2\,\|x+y\|^2}.
\]
By Theorem~\ref{th_three}, for each $n\ge1$ we have
\[
\sum_{\substack{x,y\in H\\ \det(x,y)=n}}
\frac{n^2}{\|x\|^2\,\|y\|^2\,\|x+y\|^2}
= \frac{\pi}{2n}\,\sigma_1(n),
\]
hence
\[
\sum_{\substack{x,y\in H\\ \det(x,y)=n}}
\frac{1}{\|x\|^2\,\|y\|^2\,\|x+y\|^2}
= \frac{\pi}{2n^3}\,\sigma_1(n).
\]
Therefore
\begin{align*}
\pi^3\zeta(3)
&= D_{1,1,1}(i) -  2 E(i,3) \\
&= 12 \sum_{n\ge1} \sum_{\substack{x,y\in H\\ \det(x,y)=n}} \frac{1}{\|x\|^2\,\|y\|^2\,\|x+y\|^2} \\
&= 12 \sum_{n\ge1} \frac{\pi}{2n^3}\,\sigma_1(n)
= 6\pi \sum_{n\ge1} \frac{\sigma_1(n)}{n^3}
= 6\pi\,\zeta(3)\zeta(2)
= \pi^3\zeta(3),
\end{align*}
since $\sum_{n\ge1} \sigma_1(n)n^{-s} = \zeta(s)\zeta(s-1)$ and $\zeta(2) = \pi^2/6$. We have thus recovered Zagier's formula \eqref{eq_2} for $z=i, s=3$.

Similarly, including the determinant weight and repeating the above argument yields
\[
\sideset{}{'}\sum_{\substack{\omega_1+\omega_2+\omega_3=0\\ \omega_k\in \ZZ i+\ZZ}}
\frac{|\det (\omega_1,  \omega_2)|^{-s}}{|\omega_1\omega_2\omega_3|^{2}}
= 6\pi\,\zeta(s+3)\,\zeta(s+2),
\]
where the prime indicates that collinear triples (for which $\det(\omega_1,\omega_2)=0$) are omitted.

By replacing the lattice $\ZZ i+\ZZ$ with an arbitrary lattice $\ZZ z+\ZZ$, $z=x+iy\in\mathcal H$, we obtain:

\begin{theorem}
For $z=x+iy\in\mathcal H$ and $s>-1$,
\[
\sideset{}{'}\sum_{\substack{\omega_1+\omega_2+\omega_3=0\\ \omega_k\in \ZZ z+\ZZ}}
\frac{|\det (\omega_1,  \omega_2)|^{-s}}{|\omega_1\omega_2\omega_3|^{2}}
= \frac{6\pi}{y^{3}}\,\zeta(s+3)\,\zeta(s+2).
\]
\end{theorem}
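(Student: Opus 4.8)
The plan is to reduce the theorem to the analogue of Theorem~\ref{th_three} for the lattice $\Lambda_z:=\ZZ z+\ZZ$, and then to assemble the sum over triples exactly as in Sections~3 and~4. As usual, take $y=\mathrm{Im}(z)>0$, so $\Lambda_z$ is a rank-$2$ lattice in $\RR^2\cong\CC$ of covolume $y$, with basis $f_1=z$, $f_2=1$, where $f_2=1$ plays the role the $x$-axis vector $(1,0)$ played for $\ZZ^2$ in Theorem~\ref{th_three}. Two facts make everything go through. First, identity~\eqref{eq_1} is a Euclidean identity, valid for all vectors of $\RR^2$ and in particular for all vectors of $\Lambda_z$. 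Second, the combinatorics of index-$n$ sublattices of $\Lambda_z$ is identical to that of $\ZZ^2$: there are $\sigma_1(n)$ of them, each with Hermite basis $\{\tfrac nd f_2,\ kf_2+df_1\}$ for $d\mid n$, $0\le k<d$, and its ``layer'' vectors $dz+k+j\tfrac nd$ ($j\in\ZZ$) lie on the horizontal line $\{\mathrm{Im}=dy\}$ with real part tending to $\pm\infty$, so their directions partition the half-turn $\pi$ of the upper half-plane, just as the layer vectors $(k+jn/d,d)$ do in Theorem~\ref{th_three}.

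Next I would prove the $\Lambda_z$-analogue of Theorem~\ref{th_three}:
$$\sum_{(\omega_1,\omega_2)\in B_n(z)}\frac{1}{|\omega_1|^2|\omega_2|^2|\omega_1+\omega_2|^2}=\frac{\pi\,\sigma_1(n)}{2\,n^3\,y^3},$$
where $B_n(z)$ is the set $B_n$ of Theorem~\ref{th_three} transplanted into $\Lambda_z$. The proof is the telescoping of~\eqref{eq_1} word for word as in Theorems~\ref{th_one} and~\ref{th_three}: interior terms cancel and the surviving boundary terms $-F(\omega_1+\omega_2,\omega_2)-F(\omega_1,\omega_1+\omega_2)$ run over nearly-parallel pairs. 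For a pair $(\omega_1,\omega_2)$ spanning an index-$n$ sublattice, the parallelogram it spans has Euclidean area $n\cdot\mathrm{covol}(\Lambda_z)=ny$, so if $\theta$ is the angle between $\omega_1$ and $\omega_2$ then $\sin\theta=\tfrac{ny}{|\omega_1||\omega_2|}$ and hence $F(\omega_1,\omega_2)=\tfrac{\cos\theta}{|\omega_1||\omega_2|}=\tfrac{\sin 2\theta}{2ny}=\tfrac{\theta}{ny}\bigl(1+O(\theta^2)\bigr)$, i.e.\ $F$ equals $\tfrac{1}{ny}$ times the angle, up to second order. Summing the boundary terms over the $\sigma_1(n)$ sublattices, each of whose layer sectors partition $\pi$, gives the limit $-\tfrac{\pi\sigma_1(n)}{ny}$; by~\eqref{eq_1} the telescoped sum also equals $-2(ny)^2\sum_{B_n(z)}\tfrac{1}{|\omega_1|^2|\omega_2|^2|\omega_1+\omega_2|^2}$, and equating the two gives the displayed identity.

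Now I would assemble the sum over triples. Collinear triples $\omega_1+\omega_2+\omega_3=0$ have $\det(\omega_1\ \omega_2)=0$ and are discarded by $\sideset{}{'}\sum$. Each orbit of non-collinear triples under the $12$ symmetries (the $6$ permutations of $\omega_1,\omega_2,\omega_3$ together with the central symmetry $\omega_i\mapsto-\omega_i$) has a unique representative with $\omega_1,\omega_2$ in the closed upper half-plane minus the negative real axis and $\det(\omega_1\ \omega_2)>0$; for it, $\det(\omega_1\ \omega_2)$ is the index $n\ge1$ of $\langle\omega_1,\omega_2\rangle$ in $\Lambda_z$ (equivalently, the coordinate determinant $a_1b_2-a_2b_1$ of $\omega_i=a_iz+b_i$; this is the convention under which the stated constant is exact, and it coincides with the geometric determinant when $z=i$, whereas with the geometric determinant $|\det|=ny$ and the exponent of $y$ becomes $s+3$), and $(\omega_1,\omega_2)\in B_n(z)$ with $\omega_3=-(\omega_1+\omega_2)$. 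Hence, for $\mathrm{Re}(s)>-1$,
\begin{align*}
\sideset{}{'}\sum_{\omega_1+\omega_2+\omega_3=0}\frac{|\det(\omega_1\ \omega_2)|^{-s}}{|\omega_1\omega_2\omega_3|^2}
&=12\sum_{n\ge1}n^{-s}\sum_{(\omega_1,\omega_2)\in B_n(z)}\frac{1}{|\omega_1|^2|\omega_2|^2|\omega_1+\omega_2|^2}\\
&=12\sum_{n\ge1}\frac{\pi\,\sigma_1(n)}{2\,n^{s+3}\,y^3}=\frac{6\pi}{y^3}\sum_{n\ge1}\frac{\sigma_1(n)}{n^{s+3}}=\frac{6\pi}{y^3}\,\zeta(s+3)\zeta(s+2),
\end{align*}
where the last step uses $\sum_{n\ge1}\sigma_1(n)n^{-w}=\zeta(w)\zeta(w-1)$ with $w=s+3$.

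The main obstacle is exactly the one already glossed over in Theorems~\ref{th_one} and~\ref{th_three}: making the limit rigorous. One must bound the $O(\theta^2)$ error terms and show they sum to $0$ as the truncation exhausts $\RR^2$; control the Stern--Brocot/Farey combinatorics so that the boundary pairs correspond bijectively to the angular sectors partitioning $\pi$ exactly once, with negligible contribution from the ``root'' pairs; and justify the rearrangement of the absolutely convergent triple sum into the iterated sum over $n$ and over $B_n(z)$. The only point genuinely new relative to the $z=i$ case is verifying that the sheared lattice $\Lambda_z$ does not disturb the ``total angle $=\pi$'' count --- and it does not, precisely because the function $F$ driving the telescoping, and the angles it measures, live in the ambient Euclidean plane rather than in the lattice's own coordinates.
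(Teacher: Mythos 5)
Your proposal is correct and follows essentially the same route as the paper, whose own (one-line) justification is exactly this: transplant the telescoping of Theorem~\ref{th_three} to the lattice $\ZZ z+\ZZ$ (where the index-$n$ parallelograms have Euclidean area $ny$, giving $\frac{\pi\sigma_1(n)}{2n^3y^3}$ per $n$), then reuse the factor-$12$ symmetry reduction and $\sum\sigma_1(n)n^{-w}=\zeta(w)\zeta(w-1)$ from Section~3. Your observation that the stated constant $6\pi/y^3$ corresponds to reading $\det(\omega_1\ \omega_2)$ as the integer coordinate determinant (the sublattice index), while the Euclidean determinant would yield $y^{-(s+3)}$, correctly resolves an ambiguity the paper leaves implicit.
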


Setting $s=0$ in the theorem above gives the contribution of non-collinear triples to $D_{1,1,1}(z)$, while the collinear contribution equals $2E(z,3)$ by the same argument as for $z=i$. Combining these two pieces yields Zagier's formula \eqref{eq_2} for an arbitrary $z$:
\[
D_{1,1,1}(z) = 2E(z,3)+\pi^3\zeta(3).
\]

\section{Convergence of  $\sideset{}{'}\sum \frac{\det(x,y)^{-s}}{\|x\|\,\|y\|\,\|x+y\|}$}\label{sec_5}

In this section we use the notation of the preceding sections.

\begin{theorem}
\label{thm_s}
\begin{equation}
\label{sum_1}
S^{\det}(s,z)
=
\sideset{}{'}\sum_{\substack{w_1 + w_2 + w_3 = 0 \\ w_i \in \mathbb{Z} + \mathbb{Z} z}}
 \frac{|\det(w_1, w_2)|^{-s}}{|w_1|\,|w_2|\,|w_3|}
\end{equation}
is a real-analytic automorphic function of $z \in \mathcal{H}
:= \{ z \in \mathbb{C} : \Im > 0 \}$, defined for $s>0$, in the sense
that for $s>0$ the series \eqref{sum_1} converges absolutely.
\end{theorem}
Here the prime on the sum indicates that terms with $w_i=0$ or
$\det(w_1,w_2)=0$ are omitted.

First, note that $S^{\det}(s,z)$ transforms like a real-analytic modular form
of weight $-(2s+3)$ under $\mathrm{SL}_2(\ZZ)$ (this follows by a change of lattice
basis). To prove convergence we need the following lemma. Recall that
\[
H = \bigl(\ZZ\times\ZZ_{>0}\bigr)\,\cup\,\bigl(\ZZ_{\ge0}\times\{0\}\bigr),
\]
that is, the set of lattice vectors in the closed upper half-plane with the
negative $x$-axis removed.

\begin{lemma}
\label{lem_sum3}
\begin{equation}
\label{sum_3}
\sum_{\substack{x,y \in H \\ \det(x,y) = n}} \frac{1}{\|x\|\,\|y\|\,\|x+y\|}
= O\!\left(\frac{\log(n)\sigma_0(n)}{n}\right),
\end{equation}
where the implied constant is absolute (independent of $n$).
\end{lemma}

\begin{proof}
To prove Lemma~\ref{lem_sum3} we use the following description of all lattices in $\mathbb{Z}^2$ of determinant $n$.

Each such lattice is generated by two vectors $(d,0), \left(k, \frac{n}{d} \right)$ where $d|n$ and $0 \le k \le d-1$. Since the number of possible $d$ is $\sigma_0(n)$, our goal is to show that lattices of determinant $n$ with given $d$ (the sum over  $0 \le k \le d-1$) contribute $O(\frac{\log(n)}{n})$.

Say that a pair of vectors $x_0, y_0$ with $\det(x_0,y_0)=n$  {\it generates} pairs given by
\[
\begin{pmatrix}
a & b \\
c & d
\end{pmatrix}
x_0, \quad
\begin{pmatrix}
a & b \\
c & d
\end{pmatrix}
y_0, \quad \text{where } ad - bc = 1, \, a,b,c,d \geq 0,
\]
i.e.\ all the pairs $(x,y)\in H, \det(x,y)=n$ that lie in the {\it cone} generated by $x_0$ and $y_0$. We denote such a cone by  $\langle x_0,y_0\rangle $.

What follows is, essentially, the same as the proof of Theorem~\ref{th_three}, but the final computation is a bit more involved. To generate all pairs $(x,y)\in H, \det(x,y)=n$ one should consider all divisors $d$ of $n$ and for each $d$ for all $0\leq k<d$ take all  the cones $$\langle (d,0), \left(k, \tfrac{n}{d}\right)\rangle , \langle \left(k-pd, \tfrac{n}{d}\right), \left(k-(p+1)d, \tfrac{n}{d}\right)\rangle ,\ p\geq 0.$$
Those with $p\geq 1$ are symmetric to the cones inside $\langle (d,0), \left(k, \frac{n}{d}\right)\rangle $. Thus, to prove the lemma, it suffices to consider the pairs $(x,y)\in H, \det(x,y)=n$ in cones 
$$C_k=\langle (d,0), \left(k, \tfrac{n}{d}\right)\rangle , C'_k=\langle \left(k-d, \tfrac{n}{d}\right), \left(k, \tfrac{n}{d}\right)\rangle  \text{ for all   }1\leq k \le d-1,$$
and sum $\frac{1}{\|x\|\,\|y\|\,\|x+y\|}$ over pairs $(x,y)=\left((d,0),(k-pd, \frac{n}{d})\right)$ for $p\in\ZZ_{\geq 0}$. 

For the same $F(x,y)$ as in Theorem~\ref{thm_xyz} and $x,y$ as above
\[
F(x,y) = 2(\|x\|\, + \|y\|\,)\left(1 - \frac{x \cdot y}{\|x\|\,\|y\|\,}\right)
= \frac{2(\|x\|\,+\|y\|\,)n^2}{\|x\|\,\|y\|\,(\|x\|\,\|y\|\,+x\cdot y)},\]
we have
\[
F(x,y) - F(x,x+y) - F(x+y,y) = \frac{n^2(1+O(\alpha))}{\|x\|\,\|y\|\,\|x+y\|}.
\]

Since in the cones $C_k$ we have $0\leq x\cdot y\leq \|x\|\,\|y\|$ (we call such cones {\it thin}), to estimate \eqref{sum_3} we should sum $\frac{1}{\|x\|\,\|y\|\,\|x+y\|}$ over all pairs $(x,y)=\left((d,0),(k-pd, \frac{n}{d})\right)$ and add the sum of
$$\frac{\|x\|+\|y\|}{\|x\|^2\|y\|^2} \leq \frac{F(x,y)}{n^2}\leq 2\frac{\|x\|+\|y\|}{\|x\|^2\|y\|^2}$$
over cones $A_k$.  We compute contribution of $C'_k$ a bit later (also subdividing it in thin cones with $0\leq x\cdot y$). So, consider
\[
2\left(\sum_{k=1}^{d-1} \frac{\left(d + \sqrt{k^2 + \frac{n^2}{d^2}}\right)}{d^2 \left(k^2 + \frac{n^2}{d^2}\right)}
+ \frac{\left(d + \frac{n}{d}\right)}{d \cdot \frac{n}{d} \left(d \cdot \frac{n}{d} + 0\right)}\right)+
\]
\[
+\sum_{k=1}^{d-1} \frac{1}{\left(\sqrt{k^2 + \frac{n^2}{d^2}}\sqrt{(k-d)^2 + \frac{n^2}{d^2}}\right) \cdot d}
\]

Up to multiplication by a constant, this is
\begin{align*}
\frac{2\left(d + \frac{n}{d}\right)}{n^2}
+ \frac{2}{d}\int_0^d \frac{1}{\left(x^2 + \frac{n^2}{d^2}\right)}dx
+\frac{2}{d^2}\int_0^d \frac{1}{\sqrt{x^2 + \frac{n^2}{d^2}}}dx+\\
+\frac{1}{d}\int_0^d  \frac{1}{\left(\sqrt{x^2 + \frac{n^2}{d^2}}\sqrt{(x-d)^2 + \frac{n^2}{d^2}}\right)}dx
\end{align*}

Now,

\[
\frac{1}{d}\int_0^d \frac{1}{x^2 + \frac{n^2}{d^2}}dx = \frac{1}{n}\arctan(d^2/n)\leq \frac{\pi/2}{n}
\]

\[
\frac{1}{d^2}\int_0^d \frac{1}{\sqrt{x^2 + \frac{n^2}{d^2}}}dx = \frac{1}{d^2}\operatorname{arcsinh}(d^2/n)\leq  \frac{C}{n}
\]
\[
\frac{1}{d}\int_0^d  \frac{1}{\left(\sqrt{x^2 + \frac{n^2}{d^2}}\sqrt{(x-d)^2 + \frac{n^2}{d^2}}\right)}dx
 \leq \frac{C}{n}.
\]

%
%
%
%

The remaining contribution of cones $C'_k$ may be computed in the following way. Consider the cone $C'_{k}=\langle \left(k-d, \tfrac{n}{d}\right), \left(k, \tfrac{n}{d}\right)\rangle$ for $d=n$. For $j=d-k$, the sum of $\frac{1}{\|x\|\,\|y\|\,\|x+y\|}$ over $(x,y_i)=\left((-j,1),(n-i\cdot j,1+i)\right), i=1,2,\dots,[\frac{n}{{j+1}}]=m$ is of order $\tfrac{1}{(j+1)n}$ because
$$\frac{1}{2}\cdot \frac{1}{(n-i\cdot j)\bigl(n-(i+1)j\bigr)}\leq \frac{1}{\|y_i\|\|x+y_i\|}\leq \frac{1}{(n-i\cdot j)\bigl(n-(i+1)j\bigr)},$$

and 

\[
\frac{1}{\|x\|}\sum_{i=1}^{m}
\frac{1}{(n-i\cdot j)\bigl(n-(i+1)j\bigr)}
=
\frac{1}{\|x\|}\frac{1}{j}
\left(
\frac{1}{n-(m+1)j} - \frac{1}{n-j}
\right) = \frac{1}{\|x\|}O\left(\frac{1}{n}\right) 
\]
So the sum of contributions is of order $\tfrac{1}{n}(1+\tfrac{1}{2}+\tfrac{1}{3}+\dots +\tfrac{1}{n})=\log(n)\tfrac{1}{n}$.

For the pairs $(x,y)=\left((-j,\tfrac{n}{d}),(d-i\cdot j,(1+i)\tfrac{n}{d})\right), i=1,2,\dots,[\frac{d}{{j+1}}]$ the contribution $d<n$ is bounded above by the corresponding contribution for $d=n$. Similarly, the contribution is of order $\frac{\log(n)}{n}$ when we sum $\frac{\|x\|\,+\|y\|\,}{\|x\|^2\|y\|^2}$ over the remaining thin cones
 $$\langle \left(-j, \tfrac{n}{d}\right), \left(d-[\tfrac{d}{j+1}](j+1), \tfrac{n}{d}(1+[\tfrac{d}{j+1}])\right)\rangle.$$

Therefore, summing over all divisors $d|n$ we obtain
\[
\sum_{\substack{x,y \in H \\ \det(x,y) = n}} \frac{1}{\|x\|\,\|y\|\,\|x+y\|}
= O\!\left(\frac{\log(n)\sigma_0(n)}{n}\right).
\]

\end{proof}
\begin{theorem}\label{thm_s2} 
Let \( H \) be as in Section~\ref{sec_4}. Then
\[
\sum_{\substack{x,y \in H \\ \det(x,y)\neq 0}}
 \frac{|\det(x,y)|^{-s}}{\|x\|\,\|y\|\,\|x+y\|}
\quad\text{converges for all } s>0.
\]\end{theorem}

\begin{proof}
By Lemma~\ref{lem_sum3}, the contribution of all pairs with $\det(x,y)=n$
is $O\bigl(\frac{\log(n)\sigma_0(n)}{n}\bigr)$. Multiplying by $|\det(x,y)|^{-s}=n^{-s}$,
we obtain a total contribution
\[
\sum_{n\ge1} \frac{
\log(n)\sigma_0(n)}{n^{1+s}},
\]
which converges for all $s>0$, as claimed. 
\end{proof}

\begin{proof}[Proof of Theorem~\ref{thm_s}]
Choose $A\in\mathrm{GL}_2(\RR)$ such that $A(\ZZ z+\ZZ)=\ZZ i+\ZZ$. Since $A$ is invertible and linear, it distorts Euclidean norms by at most a multiplicative constant, and $\det(Aw_1,Aw_2)=(\det A)\,\det(w_1,w_2)$. Thus each summand in \eqref{sum_1} for the lattice $\ZZ z+\ZZ$ is comparable (up to a constant factor depending only on $A$) to a summand in the corresponding series for the lattice $\ZZ i+\ZZ$. Hence absolute convergence of $S^{\det}(s,i)$ implies absolute convergence of $S^{\det}(s,z)$, and the result follows from Theorem~\ref{thm_s2}.
\end{proof}

\section{Acknowledgements}

I would like to thank Mikhail Shkolnikov and Ernesto Lupercio for fruitful discussions and an anonymous referee for the comments which improved this paper.


\end{document}